\DeclareMathOperator{\dom}{dom}
\DeclareMathOperator{\addr}{addr}
\DeclareMathOperator{\eal}{Re}
\DeclareMathOperator{\imag}{Im}
\DeclareMathOperator{\suc}{succ}
\newtheorem{ut}{Theorem}
\numberwithin{ut}{section}
\numberwithin{equation}{section}
\newtheorem{up}[ut]{Proposition}
\newtheorem{uc}[ut]{Corollary}
\newtheorem{ucl}[ut]{Claim}
\theoremstyle{definition}
\newtheorem{ue}[ut]{Example}
\begin{document}

\title{Erd\H{o}s space in Julia sets}

\subjclass[2010]{37F10, 30D05, 54F45} 
\keywords{Erd\H{o}s space, exponential map, Julia set, escaping point}
\address{Department of Mathematics, Auburn University at Montgomery, Montgomery 
AL 36117, United States of America}
\email{dsl0003@auburn.edu; dlipham@aum.edu}
\author{David S. Lipham}

\begin{abstract}We prove that the rational Hilbert space $\mathfrak E$, known as \textit{Erd\H{o}s space},   surfaces in complex dynamics via iteration of  $e^z-1$.  \end{abstract}

\maketitle

\section{Introduction}

The subspace of  $\ell^2$  consisting of all rational sequences, $$\mathfrak E:=\{\mathbf x\in \ell^2:x_n\in \mathbb Q\text{ for all }n<\omega\},$$ was introduced by Paul Erd\H{o}s  in order to show that squaring a topological space of positive dimension does not necessarily raise the dimension. In fact,  \textit{Erd\H{o}s space} $\mathfrak E$  is $1$-dimensional \cite{dims} and homeomorphic to all of its powers, up to and including $\mathfrak E^\omega$ \cite{erd}.  Dijkstra and van Mill \cite{erd} also found representations of $\mathfrak E$ in other contexts. For example, they proved that $\mathfrak E$ is topologically equivalent  to the space of homeomorphisms of $\mathbb R ^2$ which map $\mathbb Q ^2$ onto itself.  Our goal in this paper  is to show  that $\mathfrak E$  surfaces in complex dynamics. Moreover we will show that $\mathfrak E$ is generated by the  transcendental entire function $f(z)=e^z-1.$


The Julia set\footnote{The \textit{Julia set} of an entire function $f$ is traditionally defined to be the set of non-normality for the family $\{f^n:n\in \mathbb N\}$. For $f(z)=e^z-1$ it is equal to the set $J(f)$ defined above, owing to the fact that all points in the left half-plane attract to $0$.} of $f$ consists of all complex numbers whose orbits stay in the right half-plane; $$J(f)=\bigcap _{n=1}^\infty \{z\in \mathbb C:\eal(f^n(z))\geq 0\}.$$
 The first three sets from this intersection are depicted in Figure 1. Each consists of an infinite  collection of domains,   which are cut into smaller domains by incrementing $n$. Every descending sequence of domains accumulates onto  a simple curve  that connects  a point of $\mathbb C$ to the point at infinity.  Informally, this shows that $J(f)$ is a collection of mutually separated curves with endpoints.  
  See Devaney and Krych \cite{dev,bif} or Aarts and Oversteegen \cite{aa} for more about the structure of $J(f)$.

Let $E(f)$ be the set of all finite endpoints of maximal curves in $J(f)$.  
 Kawamura, Oversteegen and Tymchatyn   \cite{31} proved that $E(f)$ is homeomorphic to \textit{complete Erd\H{o}s space} $\mathfrak E_{\mathrm{c}}:=\{\mathbf x\in \ell^2:x_n\in \mathbb R\setminus \mathbb Q\text{ for all }n<\omega\}$. The main result of this paper is:
\begin{ut} The imaginary-escaping endpoint set $$\ddot E(f)=\{z\in E(f):\imag(f^n(z))\to\infty\}$$ is homeomorphic to $\mathfrak E$.\end{ut}\noindent This  provides a partial positive answer to \cite[Question 1]{lip} insofar as it shows that the escaping endpoint set 
 $\dot E(f)=\{z\in E(f):|f^n(z)|\to\infty\}$ contains a dense copy of $\mathfrak E$ (although $\dot E(f)$ is homeomorphic to neither $\mathfrak E$ nor $\mathfrak E_{\mathrm{c}}$ \cite{lipp}). 
\begin{figure}[h]
\centering
\includegraphics[scale=0.55]{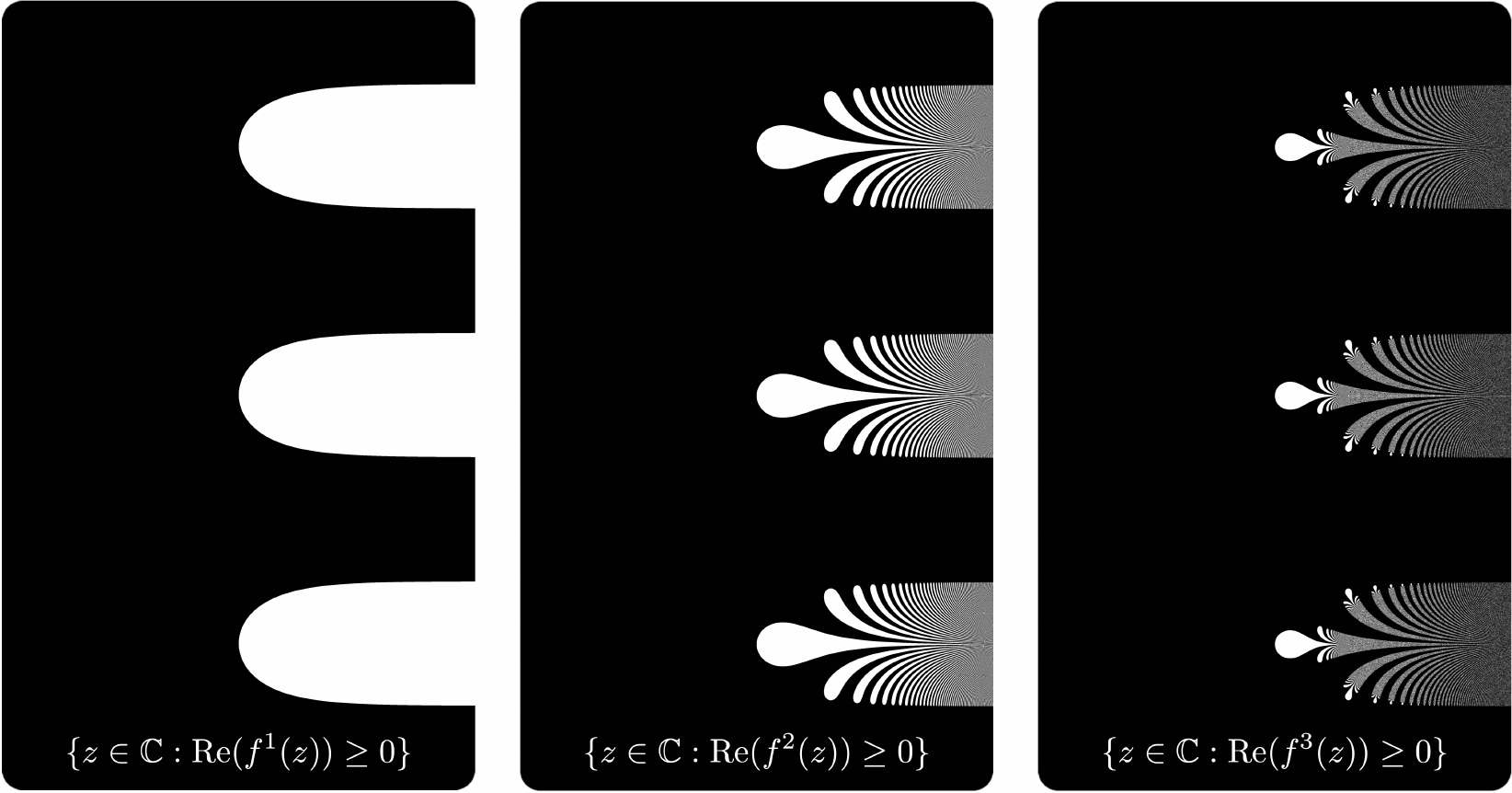}
\caption{Approximations of $J(f)$ in the window where $|\eal(z)|\leq 6$ and $|\imag(z)|\leq 10$. }
\end{figure}

\subsection*{Sketch of proof}The   proof of Theorem 1.1 will be largely based on Rempe \cite{rem2}, Dijkstra and van Mill \cite{erd}, and  Alhabib and Rempe \cite{rem}.  The topological model of $J(f)$ in \cite{rem2}  will be used to define a function $\psi$ whose graph $G^\psi_\infty$  is homeomorphic to $\ddot E(f)$.  Then we will apply the extrinsic characterization of $\mathfrak E$ from \cite{erd} to the space $G^\psi_\infty$. In order to show that the characterization fully applies to the model,  we will require several results from \cite{rem}. The most significant is   \cite[Theorem 3.6]{rem} which is essentially a generalization of  the fact that $E(f)$ is dense in $J(f)$.

All  relevant results from \cite{erd} and \cite{rem2} are presented in Sections 2 and 3. Those in \cite{rem} will be cited as needed during the proof, which is in Section 4.

\begin{figure}[h]
\centering
\includegraphics[scale=.45]{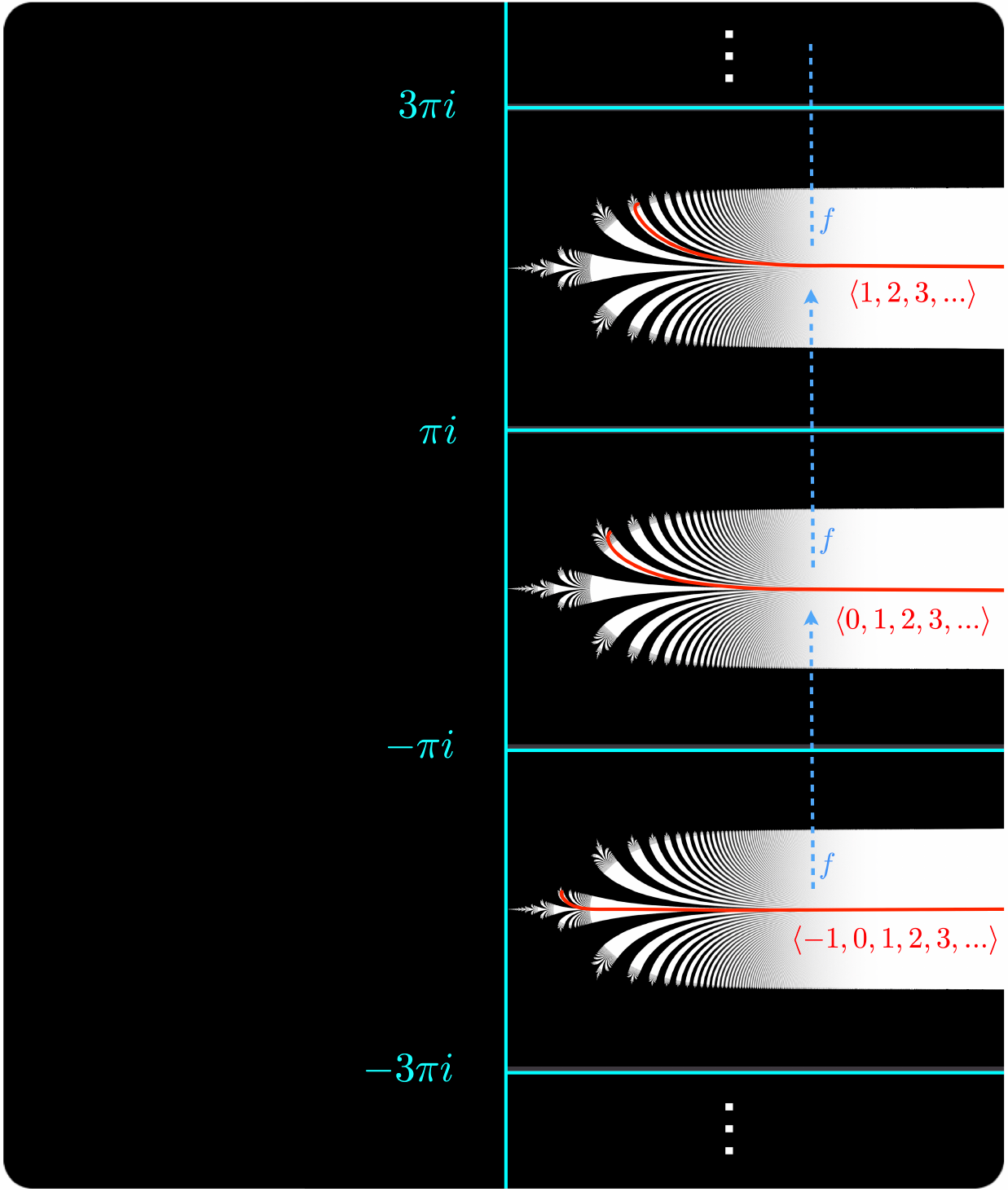}
\caption{$J(f)$ (in white), and the external addresses of three curves (in red) whose iterates tend to $\infty$ in the imaginary direction.}
\end{figure}

\section{Topological models of $J(f)$ and $\ddot E(f)$}

\subsection{External addresses}

Let $\mathbb Z ^\omega$ be the space of  integer sequences $\uline{s}=\langle s_0,s_1,s_2,\ldots\rangle$.  We say that a point $z\in \mathbb C$  has \textit{external address} $\uline s$ (in symbols, $\addr(z)=\uline s$) if $$\imag(f^n(z))\in [(2s_n-1)\pi,(2s_n+1)\pi]$$ for each $n<\omega$. 

If   $\uline s\in \mathbb Z ^\omega$ and $|s_n|$ does not increase at a faster-than-exponential rate, then $\uline s$  is the external address of a unique curve of $J(f)$; see \cite[Section 3]{bif} or \cite[Proposition 3.2]{sz}.   

\begin{ue}The sequence $\langle -1, 0,1,2,3,\ldots\rangle$ increases at only a linear rate, and is therefore the external address of a curve  in $J(f)$.  Figure 2 shows the ray at that address, followed by two of its iterates. Note that the  endpoint at this address belongs to $\ddot E(f)$.  In particular, this shows that $\ddot E(f)$ is non-empty.
\end{ue}


\subsection{Model of the Julia set}Define
$\mathcal F:[0,\infty)\times \mathbb Z ^\omega\to \mathbb R\times \mathbb Z ^\omega$ by $$\langle t,\uline{s}\rangle\mapsto \langle F(t)-2\pi|s_1|,\sigma(\uline{s})\rangle,$$ where $F(t)=e^t-1$ and   $\sigma$ is the shift  on $\mathbb Z ^\omega$ (i.e.\ $\sigma(\langle s_0,s_1,s_2,\ldots\rangle)=\langle s_1,s_2,s_3,\ldots\rangle$). Put $T(x)=t$ for each $x=\langle t,\uline{s}\rangle\in [0,\infty)\times \mathbb Z ^\omega$, and let  $$J(\mathcal F)=\{x\in [0,\infty)\times \mathbb Z ^\omega:T(\mathcal F^n(x))\geq 0\text{ for all }n\geq 0\}.$$

If  $\uline{s}\in \mathbb Z ^\omega$ and there exists $t\geq 0$ such that $\langle t,\uline{s}\rangle\in J(\mathcal F)$, then let $$t_{\uline{s}}=\min\{t\geq 0:\langle t,\uline{s}\rangle\in J(\mathcal F)\}.$$ Otherwise, put $t_{\uline{s}}=\infty$.  Observe that $$J(\mathcal F)=\bigcup _{\uline s\in \mathbb Z^\omega}[t_{\uline s},\infty)\times \{\uline s\}.$$ Thus the  points $\langle  t_{\uline{s}},\uline{s}\rangle$ with $t_{\uline s}<\infty$  are the (finite) endpoints of $J(\mathcal F)$. 

The following is  implicit in \cite{rem2}.

\begin{up}\label{ppl}There is a homeomorphism $H:J(\mathcal F)\to J(f)$ such that $$\addr(H(\langle t,\uline{s}\rangle))=\uline s$$ for every $\langle t,\uline{s}\rangle\in J(\mathcal F)$.\end{up}


\begin{proof}Let $H$ be the mapping defined in \cite[Theorem 9.1]{rem2} for the parameter $\kappa=-1$.  By  construction, $H$ is one-to-one on a set $X$ which contains all non-endpoints of $J(\mathcal F)$ \cite[Observation 3.1]{rem2}.  Since  $J(f)$ is a union of disjoint copies of $[0,\infty)$, it follows that all of $H$ is one-to-one. In the proof of  \cite[Theorem 9.1]{rem2} it is also noted that $H$ is closed  because it extends to a mapping of the one-point compactifications. So $H$ is a homeomorphism.  

Now let  $\langle t,\uline{s}\rangle\in J(\mathcal F)$.   By the remark after the statement of \cite[Theorem 9.1]{rem2}  and the construction of $\mathfrak g$ in \cite[Section 4]{rem2}, there exists $t'>t$ such that $H(\langle t',\uline{s}\rangle)=\mathfrak g(\langle t',\uline{s}\rangle)$.    By  \cite[Theorem 4.2]{rem2},    $\addr(\mathfrak g(\langle t',\uline{s}\rangle))=\uline{s}$. 
Since $H(\langle t,\uline{s}\rangle)$ and $H(\langle t',\uline{s}\rangle)$ lie on the same curve, and each curve of  $J(f)$ is contained in a horizontal strip of the form $\{z\in \mathbb C:(2k-1)\pi<\imag(z)<(2k+1)\pi\}$,  this implies $\addr(H(\langle t,\uline{s}\rangle))=\uline s$. \end{proof}

\subsection{Model of the imaginary-escaping endpoints}

\begin{uc}$\{\langle t_{\uline s},\uline s\rangle\in J(\mathcal F):s_n\to\infty\}\simeq \ddot E(f)$.\end{uc}

\begin{proof}Let $H$ be the homeomorphism from Proposition \ref{ppl}. For any endpoint $\langle t_{\uline s},\uline s\rangle\in J(\mathcal F)$ it is evident that $H(\langle t_{\uline s},\uline s\rangle)\in E(f)$. Note also that if $z\in J(f)$ and $\uline s=\addr(z)$, then $\imag(f^n(z))\to\infty$ if and only if $s_n\to\infty$. Thus $H(\{\langle t_{\uline s},\uline s\rangle\in J(\mathcal F):s_n\to\infty\})= \ddot E(f)$.\end{proof}

\section{Characterizations of $\mathfrak E$}

\subsection{Sierpi\'{n}ski stratification}The characterizations of $\mathfrak E$ in \cite[Section 7]{erd} involve trees of closed subsets that are called Sierpi\'{n}ski stratifications. They are defined as follows.

For any set $A$ we let $A^{<\omega}$ denote the set of all functions $\alpha$ such that $\dom(\alpha)<\omega$ (the domain of $\alpha$ is a finite ordinal), and the range of $\alpha$ is a subset of $A$. Thus if $\alpha\in A^{<\omega}$ and $n=\dom(\alpha)$ then $\alpha$ is an $n$-tuple of elements of $A$; $\alpha=\langle a(0),\ldots ,a(n-1)\rangle$. We shall write $\alpha\prec \beta$ if $\dom(\alpha)<\dom(\beta)$ and the restriction  $\beta\restriction \dom(\alpha)$ is equal to  $\alpha$ (that is,  $\beta(i)=\alpha(i)$ for all $i<\dom(\alpha)$).

A \textit{tree $T$ on an alphabet }$A$ is a subset of $A^{<\omega}$ that is closed under initial segments, i.e.\ if $\beta \in T$ and $\alpha\prec \beta$ then $\alpha\in T$.  An element $\lambda\in A^\omega$ is an \textit{infinite branch} of $T$ provided $\lambda\restriction k \in T$ for every $k <\omega$. We let  $[T]$ denote  the set of all infinite branches of $T$. If $\alpha,\beta\in T$ are such that $\alpha\prec \beta$ and $\dom(\beta)=\dom(\alpha)+1$, then we say that $\beta$ is an \textit{immediate successor} of $\alpha$  and $\suc(\alpha)$ denotes the set of immediate successors of $\alpha$ in $T$. 

Let $X$ be a non-empty separable metrizable space. A system $(X_\alpha)_{\alpha\in T}$ is called a \textit{Sierpi\'{n}ski stratification} of  $X$ if:
\begin{enumerate}
 \item $T$ is a tree over a countable alphabet,

 \item each $X_\alpha$ is a closed subset of $X$,   
 
\item  $X_\varnothing = X$  and $X_\alpha=\bigcup \{X_\beta:\beta\in \suc(\alpha)\}$ for each $\alpha\in T$,  and  
 
 \item  if $\lambda \in [T]$ then the sequence $X_{\lambda\restriction 0}, X_{\lambda\restriction 1}, \ldots$  converges to a point  in $X$.
\end{enumerate}

\subsection{Graphs of upper-semicontinuous functions homeomorphic to $\mathfrak E$} A function $\varphi : X \to[0,\infty]$ is \textit{upper semi-continuous}   if $\varphi^{-1}[0,t)$ is open in $X$ for every $t\in \mathbb R$. 

An upper semi-continuous  function $\varphi:X\to [0,1]$ is called a \textit{Lelek function} if $X$ is zero-dimensional,  $X' = \{x \in X:\varphi(x)>0\}$ is dense in $X$, and   $$G^\varphi_0:=\{\langle x,\varphi(x)\rangle:\varphi(x)>0\}$$ is dense in $$
L^{\varphi\restriction X'}_0:=\bigcup_{x\in X'} \{x\}\times[0,\varphi(x)].$$
A Lelek function $\varphi: X \to [0,1]$   belongs to the \textit{Sierpi\'{n}ski-Lelek class} \textsf{SL} if there exists a Sierpi\'{n}ski stratification $(X_\alpha)_{\alpha\in T}$ of $X$ such that:  \begin{enumerate}
\item[(a)] $\varphi\restriction X_\alpha$ is Lelek for every $\alpha\in T$; and 
 \item[(b)] $G^{\varphi\restriction X_\beta}_0$ is nowhere dense in $G^{\varphi\restriction X_{\alpha}}_0$ for each $\alpha\in T$ and $\beta\in \suc(\alpha)$. 
 \end{enumerate}

\begin{up}[{\cite[Theorem 7.12]{erd}}]\label{t3}If $\varphi\in \textnormal{\textsf{SL}}$, then $G^\varphi_0\simeq \mathfrak E$. \end{up}

\begin{ue}Define $\eta:\mathbb Q ^\omega\to [0,1)$ by $\eta(\mathbf x)=\frac{1}{1+\|\mathbf x\|}$, where $$\|\mathbf x\|=\sqrt{\sum_{n=0}^\infty x_n^2}$$ is the $\ell^2$-norm of $\mathbf x$, and $1/\infty=0$.  Let $T=\mathbb Q^{<\omega}$. For each $\alpha=\langle q_0,\ldots ,q_{n-1}\rangle\in T$ define $$X_\alpha=\{q_0\}\times \ldots \times \{q_{n-1}\}\times \mathbb Q\times \mathbb Q\times \ldots.$$ The system $(X_\alpha)_{\alpha\in T}$ witnesses that $\eta\in \textsf{SL}$ (see \cite[Proposition 7.11]{erd}). Hence $G^\eta_0\simeq \mathfrak E$. 
\end{ue}

\subsection{Graphs of lower-semicontinuous functions homeomorphic to $\mathfrak E$}In order to prove Theorem 1.1, we will require a lower semi-continuous version of Proposition \ref{t3}. 

A function $\psi : X \to[0,\infty]$ is  \textit{lower semi-continuous} if $\psi^{-1}(t,\infty]$ is open in $X$ for every $t \in\mathbb R$. Define  \begin{align*}G^\psi_\infty&=\{\langle \psi(x),x\rangle:\psi(x)<\infty\} \text{; and} \\
L^\psi_\infty&=\bigcup _{x\in X}[\psi(x),\infty]\times \{x\}
.\end{align*}
Note that the first coordinate is now the output of the function. This is done so that $L^\psi_\infty$ is a collection of horizontal arcs resembling $J(\mathcal F)$. 

\begin{up}\label{t4}Let $\psi:X\to [0,\infty]$ be a lower semi-continuous function with zero-dimensional domain $X$.   Suppose    $(X_\alpha)_{\alpha\in T}$ is a Sierpi\'{n}ski stratification of $X$,    \begin{enumerate} 
\item[\textnormal{(a)}] $X_\alpha ':=\{x\in X_\alpha:\psi(x)<\infty\}$ is dense in $X_\alpha$ for each $\alpha\in T$;
\item[\textnormal{(b)}]$G^{\psi\restriction X_\alpha}_\infty$ is dense in $L^{\psi\restriction X'_\alpha}_\infty$ for each $\alpha\in T$; and  
\item[\textnormal{(c)}]  $G^{\psi\restriction X_{\beta}}_\infty$ is nowhere dense in $G^{\psi\restriction X_{\alpha}}_\infty$  for each $\alpha\in T$ and $\beta\in \suc(\alpha)$.
\end{enumerate} 
Then  $G^\psi_\infty\simeq \mathfrak E$.
\end{up}

\begin{proof}Observe that $\varphi:=1/(1+\psi)\in \textsf{SL}$ and $G^\psi_\infty\simeq G^{\varphi}_0$.  So the conclusion $G^\psi_\infty\simeq \mathfrak E$ follows from Proposition \ref{t3}. \end{proof}

\begin{ue}The conditions of Proposition \ref{t4} are all satisfied if  $(X_{\alpha})_{\alpha\in T}$ is the system from Example 3.2,  and  $\psi:\mathbb Q ^\omega\to [0,\infty]$ is defined by $\psi(\mathbf x)=\|\mathbf x\|$.
\end{ue}

\section{Proof of Theorem 1.1}

Let $X=\{\uline{s}\in \mathbb Z ^\omega:s_n\to\infty\}.$ Define $\psi:X\to [0,\infty]$ by $\psi(\uline{s})=t_{\uline{s}}$.   The function $\psi$ is lower semi-continuous  \cite[Observation 3.1]{rem2}, and  $\ddot E(f)\simeq G^\psi_\infty$ by Corollary 2.3.  It remains to show $G^\psi_\infty\simeq \mathfrak E$.   This will be accomplished by constructing  a Sierpi\'{n}ski stratification of  $X$ so that the conditions in Proposition \ref{t4} are satisfied.  


Our tree $T$ will be a subset of  $(\mathbb N\times \mathbb Z)^{<\omega}$.   We may identify each    $\alpha\in (\mathbb N\times \mathbb Z)^{<\omega}$ with an  $n$-tuple of ordered pairs $\langle \langle N_0,s_0\rangle, \langle N_1,s_1\rangle,\ldots,\langle N_{n-1},s_{n-1}\rangle \rangle$ where $n=\dom(\alpha)$ and $\langle N_i,s_i\rangle=\alpha(i)$. Given $\alpha\in (\mathbb N\times \mathbb Z)^{<\omega}$ we define $N(\alpha(i))=N_i$ and  $$\alpha^\frown \langle N,s\rangle=\langle \alpha(0),\alpha(1),\ldots,\alpha(n-1),\langle N,s\rangle\rangle.$$

 Let $X_\varnothing =X$.  Supposing  $X_\alpha$ has been defined,  for each $\langle N,s\rangle\in \mathbb N\times \mathbb Z$ let  $$X_{\alpha^\frown \langle N,s\rangle}=\{{\uline{s}}\in X_\alpha:s_{\dom(\alpha)}=s\text{ and }s_n\geq \dom(\alpha)+1\text{ for all }n\geq N\}.$$ In this manner, $X_{\alpha}$ is recursively defined for every $\alpha \in (\mathbb N\times \mathbb Z)^{<\omega}$.  Let $$T=\{\alpha\in (\mathbb N\times \mathbb Z)^{<\omega}:X_\alpha\neq\varnothing\text{ and }N(\alpha(i))\geq i\text{ for each }i<\dom(\alpha)\}.$$

\begin{ucl}\label{t7}$(X_\alpha)_{\alpha\in T}$ is a Sierpi\'{n}ski stratification of $X$.\end{ucl}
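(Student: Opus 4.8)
The plan is to verify the four conditions in the definition of a Sierpi\'nski stratification directly from the explicit construction of the $X_\alpha$. Conditions (1) and (2) are essentially immediate: the alphabet $\mathbb N\times\mathbb Z$ is countable, and each $X_\alpha$ is closed in $X$ since it is cut out by the closed conditions $s_{\dom(\alpha)}=s$ and $|s_n|\ge\dom(\alpha)+1$ for $n\ge N$ (a finite intersection of clopen conditions in the product topology, intersected with the already-closed $X_\alpha$ at the previous stage). I would spell this out by induction on $\dom(\alpha)$, noting $X_\varnothing=X$ and that $T$ is closed under initial segments by construction.

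The substance is in conditions (3) and (4). For (3), given $\uline s\in X_\alpha$ with $n=\dom(\alpha)$, I need to produce $\langle N,s\rangle$ with $\alpha^\frown\langle N,s\rangle\in T$ and $\uline s\in X_{\alpha^\frown\langle N,s\rangle}$: take $s=s_n$ and, using $\uline s\in X$ so $|s_k|\to\infty$, choose $N\ge n$ large enough that $|s_k|\ge n+1$ for all $k\ge N$. Then $\uline s\in X_{\alpha^\frown\langle N,s\rangle}$, this set is nonempty, and $N(\alpha^\frown\langle N,s\rangle(i))\ge i$ for all $i$, so $\alpha^\frown\langle N,s\rangle\in T$; conversely every $X_{\alpha^\frown\langle N,s\rangle}\subseteq X_\alpha$ by definition. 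This gives $X_\alpha=\bigcup\{X_\beta:\beta\in\suc(\alpha)\}$.

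For (4), let $\lambda=\langle\langle N_0,s_0\rangle,\langle N_1,s_1\rangle,\dots\rangle\in[T]$. Each $X_{\lambda\restriction k}$ forces the first $k$ coordinates to be $s_0,\dots,s_{k-1}$, so $\bigcap_k X_{\lambda\restriction k}$ contains at most the single sequence $\uline s^\ast:=s_0s_1s_2\cdots$; I must check $\uline s^\ast$ genuinely lies in $X$, i.e.\ $|s^\ast_k|\to\infty$. This is where the bound $N(\alpha(i))\ge i$ built into $T$ is used: for $\beta=\lambda\restriction k\in T$ with $k\ge 1$, membership $\uline s^\ast\in X_\beta$ (which holds since $\uline s^\ast\restriction k$ has the right values — one should note $X_{\lambda\restriction k}$ is nonempty and any of its elements agrees with $\uline s^\ast$ on its first $k$ entries, but to conclude $\uline s^\ast\in X_{\lambda\restriction k}$ one needs the tail condition too) forces $|s^\ast_n|\ge k$ for all $n\ge N_{k-1}$, and since $N_{k-1}$ is a fixed finite number this shows for every $k$ that $|s^\ast_n|\ge k$ eventually, i.e.\ $|s^\ast_n|\to\infty$. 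The cleanest way to organize this: first show by downward bookkeeping that $\uline s^\ast\in X_{\lambda\restriction k}$ for all $k$ — for fixed $k$, pick any $\uline t\in X_{\lambda\restriction(k+1)}$ wait no, better: pick $\uline t^{(m)}\in X_{\lambda\restriction m}$ for each $m$ (nonempty since $\lambda\restriction m\in T$), observe $\uline t^{(m)}$ agrees with $\uline s^\ast$ on coordinates $<m$ and satisfies $|t^{(m)}_n|\ge m$ for $n\ge N_{m-1}$, hence for $n<m$ we get from taking $m'>n$ with $\uline t^{(m')}$ that $|s^\ast_n|=|t^{(m')}_n|$; this pins down $s^\ast$ and the tail bounds pass to the limit. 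Then $X_{\lambda\restriction k}\to\{\uline s^\ast\}$ because diameters shrink (the sets force more and more initial coordinates).

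The main obstacle I anticipate is condition (4): one must be careful that the putative limit point $\uline s^\ast$ actually belongs to $X=\{\uline s:|s_n|\to\infty\}$ and not merely to $\mathbb Z^\omega$, and that the closed sets $X_{\lambda\restriction k}$ really do converge to it (a priori they could be empty in the limit, or converge only in $\mathbb Z^\omega$). Both points hinge on the two design features of the construction — the requirement $N(\alpha(i))\ge i$ in the definition of $T$, and the tail growth condition $|s_n|\ge\dom(\alpha)+1$ in the definition of $X_{\alpha^\frown\langle N,s\rangle}$ — so the proof should explicitly invoke each. Everything else is routine verification from the definitions.
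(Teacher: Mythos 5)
Your proposal is correct and follows essentially the same route as the paper: the tree and closedness conditions are immediate, condition (3) is handled by choosing $N$ so that $|s_n|\ge\dom(\alpha)+1$ for $n\ge N$, and condition (4) is verified exactly as in the paper by picking witnesses $\uline t^{(m)}\in X_{\lambda\restriction m}$ at deep levels, using their agreement with $\uline s^\ast$ on initial coordinates together with the tail bound to conclude $|s^\ast_n|\to\infty$. The only quibble is your parenthetical that each $X_\beta$ is cut out by ``a finite intersection of clopen conditions'': the tail condition $|s_n|\ge\dom(\alpha)+1$ for all $n\ge N$ is an infinite intersection of clopen sets, hence closed but not obviously clopen, which is all that is needed.
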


\begin{proof}We will verify  (1) through (4) from the definition in Section 3.1.  

Clearly $T$ is a tree over $\mathbb N\times \mathbb Z$,  and each  $X_\alpha$ is a closed subset of $X_{\varnothing}=X$. Thus (1) and (2) hold.  

To see that $X_\alpha\subset \bigcup \{X_{\beta}:\beta\in \suc(\alpha)\}$, let ${\uline{s}}\in X_\alpha$.  Since  ${\uline{s}}\in X$, there exists $N\geq\dom(\alpha)$ such that $s_n\geq \dom(\alpha)+1$ for all $n\geq N$. Let $\beta=\alpha^\frown \langle N,s_{\dom(\alpha)}\rangle$. Then ${\uline{s}}\in X_\beta$ and $\beta\in \suc(\alpha)$.  The other inclusion is trivial, so this verifies property (3). 
 
 Finally,  let   $\lambda=\langle\langle N_0,s_0\rangle, \langle N_1,s_1\rangle,\ldots\rangle\in [T]$ be given.  
The sequence $X_{\lambda\restriction 0}, X_{\lambda\restriction 1}, \ldots$ clearly converges to $\uline s$. We will prove $\uline s\in X$ by showing that $s_n\geq k+1$ for each $k<\omega$ and $n\geq N_k$. To that end, let  $k<\omega$ and suppose $n\geq N_k$.    Since $\lambda\restriction n+1\in T$,   there exists $\uline{s}^0\in X_{\lambda\restriction n+1}$.  Then $ s^0_n=s_n$. Further,  $\lambda\restriction k+1\in T$  implies $N_k\geq k$ which gives us $n\geq k$. Thus $X_{\lambda\restriction n+1}\subset X_{\lambda\restriction k+1}$ and so $\uline{s}^0\in X_{\lambda\restriction k+1}$. Therefore $s^0_n\geq k+1$. We have $s_n=s^0_n\geq k+1$ as desired. This completes the proof of (4). \end{proof}

The next three claims will establish (a), (b) and (c) of Proposition \ref{t4}.

\begin{ucl}\label{t8}$X_\alpha ':=\{\uline{s}\in X_\alpha:t_{\uline{s}}<\infty\}$ is dense in $X_\alpha$ for each $\alpha\in T$.\end{ucl}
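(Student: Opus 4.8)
The plan is to reduce the claim to the following: for every $\uline{s}\in X_\alpha$ and every $m<\omega$, the relatively open set $\{\uline{r}\in X_\alpha:r_i=s_i\text{ for all }i<m\}$ meets $X_\alpha'$. These sets form a neighbourhood base of $\uline{s}$ in $X_\alpha$, so this is exactly the density of $X_\alpha'$ in $X_\alpha$. As enlarging $m$ only shrinks such a set, I may assume $m\geq\dom(\alpha)$.

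First I would retain the first $m$ coordinates of $\uline{s}$ and graft on a slowly growing tail: set $s'_i=s_i$ for $i<m$ and $s'_i=i+1$ for $i\geq m$. Then $\uline{s}'$ lies in the prescribed neighbourhood of $\uline{s}$, and $\uline{s}'\in X_\alpha$: since $|s'_i|=i+1\to\infty$ we have $\uline{s}'\in X$; the first $\dom(\alpha)$ coordinates of $\uline{s}'$ agree with those of $\uline{s}$, hence with $\alpha$; and for each $k<\dom(\alpha)$ and each $n\geq N(\alpha(k))$ one has $|s'_n|\geq k+1$ --- for $n<m$ since $\uline{s}\in X_\alpha$, and for $n\geq m$ since $|s'_n|=n+1>m\geq\dom(\alpha)>k$.

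The substantive part is to verify $t_{\uline{s}'}<\infty$, i.e.\ that $\langle t,\uline{s}'\rangle\in J(\mathcal F)$ for some $t\geq 0$. Let $x_n$ be the first coordinate of $\mathcal F^{n}(\langle t,\uline{s}'\rangle)$, so that $x_0=t$ and $x_{n+1}=F(x_n)-2\pi|s'_{n+1}|$; membership in $J(\mathcal F)$ is precisely the condition that $x_n\geq 0$ for every $n$. For $n\geq m$ the subtracted terms $2\pi|s'_{n+1}|=2\pi(n+2)$ grow only linearly, whereas $F(x)=e^x-1$ grows exponentially, so a short induction (after enlarging $m$ if necessary) shows that once $x_n$ exceeds the mild threshold $\log\!\big(2\pi(n+1)+1\big)+1$ it remains above the corresponding threshold at every later index, and in particular $x_n\geq 0$ for all $n\geq m$. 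For the finitely many indices $n<m$, whose data $2\pi|s_n|$ is fixed by $\uline{s}$, I would simply take $t$ large: each $x_n$ is a continuous increasing function of $t$ tending to $\infty$, so for $t$ sufficiently large one has $x_1,\dots,x_{m-1}\geq 0$ and $x_m$ above its threshold, which starts the induction. Combining the two ranges gives $x_n\geq 0$ for every $n$, hence $t_{\uline{s}'}\leq t<\infty$ and $\uline{s}'\in X_\alpha'$.

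The hard part will be this last verification: the value of $t$ needed to carry the iteration past the fixed initial block $s_0,\dots,s_{m-1}$ must be chosen after $m$ has been enlarged enough for the tail estimate to be self-sustaining, but the two choices do not conflict, since $F$ is increasing and so $x_m$ is made large simply by making $t$ large. (Alternatively one may bypass the explicit estimate by quoting the standard fact that every exponentially bounded external address --- in particular one whose entries grow at most polynomially --- is realised by a ray of $J(f)$, cf.\ \cite{rem2}.)
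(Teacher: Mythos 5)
Your proposal is correct and follows essentially the same route as the paper: both approximate an arbitrary $\uline{s}\in X_\alpha$ by sequences that keep a long initial segment of $\uline{s}$ and continue with a linearly growing tail, then argue that such an address is realised by a point of $J(\mathcal F)$, so $t_{\uline{s}'}<\infty$. The only difference is in that last step, where the paper quotes \cite[Lemma 3.8]{rem} (giving $t_{\uline{s}'}\leq \sup_{k\geq 1}F^{-k}(2\pi|s'_k|)+1<\infty$ since the tail grows linearly) rather than running your direct induction on the orbit, which is sound but, as you note, essentially re-proves that cited estimate.
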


\begin{proof}Let $\uline{s}\in X_\alpha$. For each $n\geq \dom(\alpha)$ define $\uline{s}^n\in \mathbb Z ^\omega$ by $\uline{s}^n\restriction n=\uline{s}\restriction n$ and $s^n_k=k$ for all $k\geq n$. Clearly $\uline{s}^n\to \uline{s}$ and $\uline{s}^n\in X_\alpha$.  Additionally,  $$\sup_{k\geq 1} F^{-k}(2\pi | s^n_k|)<\infty$$   because $F^{-n}(2\pi n)\geq F^{-k}(2\pi k)$ for all $k\geq n$. Hence $t_{\uline{s}^n}<\infty$ by \cite[Observation 3.7]{rem}.  Therefore  $\uline{s}^n\in X'_\alpha$. We conclude that $\uline s$ lies in the closure of $X_\alpha '$, which shows that $X_\alpha '$ is dense in $X_{\alpha}$.\end{proof}

\begin{ucl} \label{t9}$G^{\psi\restriction X_\alpha}_\infty$ is dense in $L^{\psi\restriction X'_{\alpha}}_\infty$ for each $\alpha\in T$.\end{ucl}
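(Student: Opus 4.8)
The plan is to fix $\alpha\in T$ and show that every point $\langle r,\uline{s}\rangle\in L^{\psi\restriction X'_\alpha}_\infty$ — meaning $\uline{s}\in X'_\alpha$ and $t_{\uline{s}}\le r<\infty$ — is approximated by points of $G^{\psi\restriction X_\alpha}_\infty$. Since $X$ carries the product topology and $[0,\infty]$ is metrizable, it suffices to produce, for each $m$, a point $\uline{u}\in X_\alpha$ with $\uline{u}\restriction m=\uline{s}\restriction m$ and $|t_{\uline{u}}-r|<1/m$. The idea is to build $\uline{u}$ by copying $\uline{s}$ on the first $m$ coordinates and then prescribing the tail so that the "required escape speed" produced by the formula $t^*_{\uline{u}}=\sup_{k\ge 1}F^{-k}(2\pi|u_k|)$ is tuned to land near $r$; the bound $t_{\uline{u}}\le t^*_{\uline{u}}+1$ from \cite[Lemma 3.8]{rem}, together with the obvious lower bound $t_{\uline{u}}\ge t^*_{\uline{u}}$ (which holds because $\langle t,\uline{u}\rangle\in J(\mathcal F)$ forces $T(\mathcal F^k(\langle t,\uline{u}\rangle))\ge 0$, i.e.\ $F^k(t)\ge 2\pi|u_k|$ for all $k$), pins $t_{\uline{u}}$ within an interval of length $1$ around $t^*_{\uline{u}}$.

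The key steps, in order, are: (1) reduce density to the one-point approximation statement just described, using that $X_\alpha$ is closed so the candidate points genuinely lie in $X_\alpha$ and hence $\langle t_{\uline{u}},\uline{u}\rangle\in G^{\psi\restriction X_\alpha}_\infty$ whenever $t_{\uline{u}}<\infty$; (2) given $r$ and $m$, choose a single index $k_0>\max\{m,\dom(\alpha), N\}$ (where $N$ controls the tail condition defining $X_\alpha$) and set $u_{k_0}$ to be the integer nearest to $\tfrac{1}{2\pi}F^{k_0}(r)$, so that $F^{-k_0}(2\pi|u_{k_0}|)$ is within $O(F^{-k_0}(\cdot)' \cdot 1)$ of $r$, which is $<1/m$ once $k_0$ is large because $(F^{-k_0})'$ decays; (3) fill in the remaining tail coordinates $u_k$ for $k>\dom(\alpha)$, $k\ne k_0$, large enough in absolute value that $|u_k|\to\infty$ (so $\uline{u}\in X$) and that the tail-threshold condition $|u_n|\ge\dom(\alpha)+1$ for $n\ge N$ holds (so $\uline{u}\in X_\alpha$), but controlled enough that $F^{-k}(2\pi|u_k|)\le r$ for every such $k$ — e.g.\ take $|u_k|\le\tfrac{1}{2\pi}F^k(r)$, which is possible for all large $k$ since $F^k(r)\to\infty$, while for the finitely many small $k$ we simply keep $u_k=s_k$ on $k<m$ and choose bounded values elsewhere; (4) conclude $t^*_{\uline{u}}=F^{-k_0}(2\pi|u_{k_0}|)$ up to error, hence $|t^*_{\uline{u}}-r|<1/m$, hence $t_{\uline{u}}\in[r-1/m,\,r+1+1/m]$ — and then refine by observing we may instead target $r-1$ or adjust the "$+1$" slack via a more careful choice, or, more simply, absorb the discrepancy by re-running the argument after noting that the set of attainable $t_{\uline{u}}$ is in fact dense, which is what \cite[Lemma 3.8]{rem} and the lower bound give once we let $m\to\infty$ with $r$ ranging over a dense set.

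The main obstacle is the gap of width $1$ between $t^*_{\uline{u}}$ and $t_{\uline{u}}$: matching $t^*_{\uline{u}}$ to $r$ only shows $t_{\uline{u}}$ lies in a length-$1$ window, not that it equals $r$. To close this I would use that the min in the definition of $t_{\uline{u}}$ is achieved and depends on the full tail, and run a two-sided squeeze: produce $\uline{u}$ with $t^*_{\uline{u}}$ slightly below $r$ giving $t_{\uline{u}}\le r$, and separately exploit that increasing a single far-out coordinate $|u_{k_0}|$ pushes $t_{\uline{u}}\ge F^{-k_0}(2\pi|u_{k_0}|)$ arbitrarily close to $r$ from below; since these perturbations change only coordinates beyond index $m$, both candidates agree with $\uline{s}$ on $\uline{s}\restriction m$, and the values $t_{\uline{u}}$ they realize are dense in $[t_{\uline{s}},\infty)$. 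This density of realized values in the fibre, combined with the fibrewise interval structure of $L^{\psi\restriction X'_\alpha}_\infty$, yields that $G^{\psi\restriction X_\alpha}_\infty$ is dense in $L^{\psi\restriction X'_\alpha}_\infty$, completing the proof.
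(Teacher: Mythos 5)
Your overall strategy (approximate $\langle r,\uline{s}\rangle$ by graph points $\langle t_{\uline{u}},\uline{u}\rangle$ with $\uline{u}$ agreeing with $\uline{s}$ on a long initial segment and a tail tuned to force $t_{\uline{u}}\approx r$) has the right shape, but there is a genuine gap exactly where you flag it, and your proposed repair does not close it. The quantity $t^*_{\uline{u}}=\sup_{k\ge 1}F^{-k}(2\pi|u_k|)$ records only the one-step necessary conditions $F^k(t)\ge 2\pi|u_k|$, whereas $t_{\uline{u}}$ is governed by the full iterated system $T(\mathcal F^k(\langle t,\uline{u}\rangle))\ge 0$, in which the deficits $2\pi|u_j|$ for $j<k$ accumulate before $F$ is applied again. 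The cited Lemma 3.8 pins $t_{\uline{u}}$ only inside the window $[t^*_{\uline{u}},t^*_{\uline{u}}+1]$, and nothing in your construction controls where in that window it lands. In particular the step ``produce $\uline{u}$ with $t^*_{\uline{u}}$ slightly below $r$, giving $t_{\uline{u}}\le r$'' is a non sequitur: you only get $t_{\uline{u}}\le t^*_{\uline{u}}+1$, which may exceed $r$ by nearly $1$. Similarly, arranging $F^{-k_0}(2\pi|u_{k_0}|)$ close to $r$ gives only a lower bound on $t_{\uline{u}}$; combining the two one-sided estimates still leaves $t_{\uline{u}}$ anywhere in an interval of length about $1$, so the asserted ``density of realized values'' in the fibre does not follow from anything you have actually proved.

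Closing this gap is precisely the content of Theorem 3.6 of Alhabib and Rempe-Gillen, and the paper's proof simply invokes it: for $A_{\uline{s}^0}=\{\uline{s}:|s_n|\ge|s^0_n|\text{ for all }n\}$ that theorem gives $\overline{G^{\psi\restriction A_{\uline{s}^0}}_\infty}=L^{\psi\restriction A'_{\uline{s}^0}}_\infty$, and intersecting with the clopen cylinder $B_{\uline{s}^0}$ of sequences agreeing with $\uline{s}^0$ on $\dom(\alpha)$ localizes the statement into $X_\alpha$ (clopenness is what lets the closure identity pass to the intersection). A self-contained version of your argument would have to replace the tuning of the single supremum $t^*_{\uline{u}}$ by a recursive (``greedy'') choice of the tail entries $u_k$ that keeps $T(\mathcal F^k(\langle r,\uline{u}\rangle))$ barely nonnegative at every step, so that decreasing $t$ below $r$ by any fixed amount violates some inequality; that is what forces $t_{\uline{u}}$ to be near $r$ itself rather than merely in a unit window. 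As written, the proposal does not establish the claim.
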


\begin{proof}Fix ${\uline{s}}^0\in  X'_\alpha$. We will show that $[\psi(\uline{s}^0),\infty]\times \{{\uline{s}}^0\}\subset\overline{G^{\psi\restriction X_\alpha}_\infty}$. 

Let $A_{{\uline{s}}^0}=\{{\uline{s}}\in \mathbb Z ^\omega:s_n\geq s^0_n\text{ for every }n<\omega\}\subset X$. Then 
$\{\langle t_{\uline{s}},{\uline{s}}\rangle:{\uline{s}}\in A'_{{\uline{s}}^0}\}$ is the set of endpoints of $$\bigcup _{{\uline{s}}\in A'_{{\uline{s}}^0}}[t_{\uline{s}},\infty]\times \{{\uline{s}}\}.$$ The latter set is denoted $X_{{\uline{s}}^0}(\mathcal F)$ in \cite{rem}. By  \cite[Theorem 3.6]{rem}, the endpoints of  $X_{{\uline{s}}^0}(\mathcal F)$ are dense in $X_{{\uline{s}}^0}(\mathcal F)$.   Thus we have $$\overline{G^{\psi\restriction A_{{\uline{s}}^0}}_\infty}=\overline{ \{\langle t_{\uline{s}},{\uline{s}}\rangle:{\uline{s}}\in A'_{{\uline{s}}^0}\}}=\bigcup _{{\uline{s}}\in A'_{{\uline{s}}^0}}[t_{\uline{s}},\infty]\times \{{\uline{s}}\}=L^{\psi\restriction A'_{{\uline{s}}^0}}_\infty.$$ 
Now let $B_{{\uline{s}}^0}=\{{\uline{s}}\in \mathbb Z^\omega: \uline{s}\restriction \dom(\alpha)=\uline{s}^0\restriction \dom(\alpha)\}$, and let $C_{{\uline{s}}^0}=A_{{\uline{s}}^0}\cap B_{{\uline{s}}^0}\subset X_\alpha$. Since $B_{{\uline{s}}^0}$ is clopen in $\mathbb Z^\omega$, the equation above implies that $$\overline{G^{\psi\restriction C_{{\uline{s}}^0}}_\infty}=L^{\psi\restriction C'_{{\uline{s}}^0}}_\infty.$$ Therefore $[\psi(\uline{s}^0),\infty]\times \{{\uline{s}}^0\}\subset \overline{G^{\psi\restriction C_{{\uline{s}}^0}}_\infty}\subset \overline{G^{\psi\restriction X_\alpha}_\infty}$. 
\end{proof}


\begin{ucl}\label{t10} $G^{\psi\restriction X_{\beta}}_\infty$ is nowhere dense in $G^{\psi\restriction X_{\alpha}}_\infty$  for each $\alpha\in T$ and $\beta\in \suc(\alpha)$.\end{ucl}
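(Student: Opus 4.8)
The plan is to fix $\alpha\in T$ and $\beta=\alpha^\frown\langle N,s\rangle\in\suc(\alpha)$, write $d=\dom(\alpha)$, and show that the set $G^{\psi\restriction X_\beta}_\infty$ is closed in $G^{\psi\restriction X_\alpha}_\infty$ and has empty interior there. Closedness is immediate: $X_\beta$ is closed in $X_\alpha$ by Claim~\ref{t7}, so $L^{\psi\restriction X'_\beta}_\infty$ is relatively closed, and by \cite[Theorem 3.6]{rem} (as used in the proof of Claim~\ref{t9}) we have $\overline{G^{\psi\restriction X_\beta}_\infty}=L^{\psi\restriction X'_\beta}_\infty$, which meets $G^{\psi\restriction X_\alpha}_\infty$ only in points of $G^{\psi\restriction X_\beta}_\infty$ itself since a pair $\langle t_{\uline s},\uline s\rangle$ with $\uline s\in X_\alpha$ lies in that set precisely when $\uline s\in X_\beta$. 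So the real content is that $G^{\psi\restriction X_\beta}_\infty$ has empty interior in $G^{\psi\restriction X_\alpha}_\infty$.

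For the empty-interior part I would fix a basic relatively open neighborhood in $G^{\psi\restriction X_\alpha}_\infty$ of a point $\langle t_{\uline s},\uline s\rangle$ with $\uline s\in X'_\alpha$, say determined by a finite window on the sequence and an $\varepsilon$-ball in the first coordinate, and produce inside it a point of $G^{\psi\restriction X_\alpha}_\infty\setminus G^{\psi\restriction X_\beta}_\infty$. There are two ways a point of $X_\alpha$ can fail to be in $X_\beta$: its $d$-th coordinate can differ from $s$, or some coordinate $s_n$ with $n\geq N$ can have $|s_n|\leq d$. The cleanest route is the first: build $\uline s^{\,*}$ agreeing with $\uline s$ on the finite window (which I may assume includes the index $d$ only if $s_d=s$ — if $s_d\neq s$ already then $\uline s\notin X_\beta$ and we are trivially done, so assume $s_d=s$ and the window reaches past $d$), and differ from $\uline s$ only far out. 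Concretely, take $\uline s^{\,*}\restriction m=\uline s\restriction m$ for $m$ past the window, and for $k\geq m$ set $s^{\,*}_k$ to grow linearly but choose the very first such coordinate, or one isolated coordinate at some large index $n_0\geq N$, to have small absolute value $\leq d$; then $\uline s^{\,*}\in X_\alpha$ but $\uline s^{\,*}\notin X_\beta$. By controlling the tail via the $t^*$-estimate of Claim~\ref{t8} (i.e. $t_{\uline s^{\,*}}\leq t^*_{\uline s^{\,*}}+1$ with $t^*$ governed by the finite window plus the eventually-linear tail), and using that $F^{-k}(2\pi|s^{\,*}_k|)\to 0$, I can arrange $t_{\uline s^{\,*}}$ to lie within $\varepsilon$ of $t_{\uline s}$ — more precisely, since altering only large-index coordinates changes $t^*$ by an arbitrarily small amount, and since $t_{\uline s^{\,*}}$ is squeezed between $t^*_{\uline s^{\,*}}$ (a lower-type bound available from \cite[Theorem 3.6]{rem}) and $t^*_{\uline s^{\,*}}+1$, I should instead move $\langle t_{\uline s},\uline s\rangle$ slightly: approximate the given target point, which has first coordinate some $t\geq t_{\uline s}$, by a point with first coordinate $t_{\uline s^{\,*}}$ using that $G^{\psi\restriction X_\alpha}_\infty$ is dense in $L^{\psi\restriction X'_\alpha}_\infty$ (Claim~\ref{t9}) and that the vertical fibers $[t_{\uline s^{\,*}},\infty]$ are dense from below in themselves.

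The step I expect to be the main obstacle is the quantitative control of $t_{\uline s^{\,*}}$: unlike $t^*$, the quantity $t_{\uline s}=\min\{t\ge 0:\langle t,\uline s\rangle\in J(\mathcal F)\}$ is not an explicit supremum, and perturbing coordinates of $\uline s$ — even far-out ones — could in principle move $t_{\uline s}$ by a bounded-but-not-small amount. The way around this is to lean on \cite[Theorem 3.6]{rem}, which (as invoked in Claim~\ref{t9}) identifies $\overline{G^{\psi\restriction A}_\infty}$ with $L^{\psi\restriction A'}_\infty$ for sets $A$ of the form $A_{\uline s^0}\cap B_{\uline s^0}$: this gives me, for free, that the map $\uline s\mapsto t_{\uline s}$ restricted to such an $A$ has graph whose closure is the full "region above the graph," so that near any $\langle t,\uline s\rangle$ with $t\ge t_{\uline s}$ there are points $\langle t',\uline s'\rangle$ of the graph with $\uline s'$ differing from $\uline s$ on a prescribed coordinate and with $t'$ as close to $t$ as desired. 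Packaging the perturbation through that theorem, rather than trying to estimate $t_{\uline s^{\,*}}$ directly, is what makes the argument go through; once that is in hand, checking $\uline s'\in X_\alpha\setminus X_\beta$ is a matter of reading off the defining conditions, and the claim follows.
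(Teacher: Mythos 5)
Your reduction is the right one (closedness is immediate, so it suffices to find points of $G^{\psi\restriction X_\alpha\setminus X_\beta}_\infty$ arbitrarily close to a given $\langle t_{\uline{s}},\uline{s}\rangle$), and your choice of perturbation is essentially the paper's: replace a single far-out coordinate $s_{n_0}$, $n_0\geq N$, by an integer of absolute value at most $\dom(\alpha)$, which throws the sequence out of $X_\beta$ while keeping it in $X_\alpha$. You also correctly identify the crux --- controlling $t_{\uline{s}^{\,*}}$, since $t_{\uline{s}}$ is a minimum over a dynamically defined set and not an explicit supremum. But the resolution you propose does not work. \cite[Theorem 3.6]{rem} only says that the graph $G^{\psi\restriction A}_\infty$ is dense in the region $L^{\psi\restriction A'}_\infty$ above it; it gives no control over \emph{which} sequences realize the approximation, so the approximating points near $\langle t_{\uline{s}},\uline{s}\rangle$ could a priori all lie in $G^{\psi\restriction X_\beta}_\infty$ (e.g.\ they could be the point itself). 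Your restatement of the theorem --- that one can find nearby graph points ``with $\uline{s}'$ differing from $\uline{s}$ on a prescribed coordinate and with $t'$ as close to $t$ as desired'' --- is precisely the assertion to be proved, not a consequence of the theorem. Likewise the $t^*$-estimate from Claim~\ref{t8} only yields $t_{\uline{s}^{\,*}}\leq t^*_{\uline{s}^{\,*}}+1$, which need not be anywhere near $t_{\uline{s}}$.

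The missing ingredient is the monotonicity statement \cite[Observation 3.7]{rem}: decreasing coordinates of $\uline{s}$ in absolute value does not increase $t_{\uline{s}}$. With $\uline{s}^{\,n}$ obtained from $\uline{s}$ by setting $s^n_n=\min\{|s_n|,\dom(\alpha)\}$ and leaving all other coordinates alone, this gives $t_{\uline{s}^{\,n}}\leq t_{\uline{s}}$ for every $n$, and lower semi-continuity of $\psi$ supplies the matching lower bound $\liminf_n t_{\uline{s}^{\,n}}\geq t_{\uline{s}}$; hence $\langle t_{\uline{s}^{\,n}},\uline{s}^{\,n}\rangle\to\langle t_{\uline{s}},\uline{s}\rangle$, and for $n\geq N(\beta(\dom(\alpha)))$ these points lie in $G^{\psi\restriction X_\alpha\setminus X_\beta}_\infty$. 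Without that observation (or some substitute giving a two-sided bound on $t_{\uline{s}^{\,*}}$ in terms of $t_{\uline{s}}$), your argument has a genuine gap at its central step.
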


\begin{proof}Since  $G^{\psi\restriction X_{\beta}}_\infty$ is a closed subset of $G^{\psi\restriction X_\alpha}_\infty$,  it suffices to show that  $G^{\psi\restriction X_\alpha\setminus X_{\beta}}_\infty$ is dense in $G^{\psi\restriction X_{\alpha}}_\infty$. To that end, let $\langle t_{\uline{s}},\uline{s}\rangle \in G^{\psi\restriction X_\alpha}_\infty$.  

We will demonstrate that a sequence of points in $G_\infty^{\psi\restriction X_\alpha\setminus X_{\beta}}$ converges to $\langle t_{\uline{s}},\uline{s}\rangle$.  For each $n<\omega$ define $\uline{s}^n\in \mathbb Z ^\omega$ by ${s}^n_i={s}_i$ for all $i\neq n$, and ${s}^n_n=\min\{s_n,\dom(\alpha)\}.$   Clearly $|s^n_i|\leq |s_i|$ for all $i<\omega$. So  $ t_{\uline{s}^n}\leq t_{\uline{s}}$  by \cite[Observation 3.7]{rem}. Then $\langle t_{\uline{s}^n},\uline{s}^n\rangle\to \langle t_{\uline{s}},\uline{s}\rangle$ by lower semi-continuity of $\psi$.    Note that  $$\langle t_{\uline{s}^n},\uline{s}^n\rangle\in G^{\psi\restriction X_\alpha\setminus X_{\beta}}_\infty$$ when $n\geq N_{\dom(\alpha)}:=N(\beta(\dom(\alpha)))$. Therefore $\langle t_{\uline{s}},\uline{s}\rangle\in \overline{G_\infty ^{\psi\restriction X_\alpha\setminus X_{\beta}}}.$
\end{proof}

From Claims \ref{t7} through \ref{t10} and Proposition \ref{t4}, we conclude that $ \ddot E(f)\simeq \mathfrak E.$ 

\section{The point at infinity} In proving that $\mathfrak E$ is $1$-dimensional, Erd\H{o}s showed that $\mathfrak E\cup \{\infty\}$ is connected, where $\infty$ is the single point needed to compactify $\ell^2$. 

We have shown that $\ddot E(f)$ is a topological embedding of $\mathfrak E$ into the complex plane, and we can add that $\ddot E(f) \cup\{\infty\}$ is connected, $\infty$ here being the point at infinity on the Riemann sphere. This we will argue  using the homeomorphism $H$ from Proposition \ref{ppl} and the sets defined  in Claim 4.3. 

\begin{ut}$\ddot E(f) \cup\{\infty\}$ is connected. \end{ut}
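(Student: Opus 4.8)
The plan is to mimic Mayer's original argument for $E(f)\cup\{\infty\}$: show that the one-point compactification of $\dot E_{\imag}(f)$ is connected by producing, for each point $z$, an arc in $\mathbb C\cup\{\infty\}$ running from $z$ out to $\infty$ whose intersection with $\dot E_{\imag}(f)\cup\{\infty\}$ is $\{z,\infty\}$, and then invoking the fact that a space which becomes connected after adding a single point is connected iff it is not itself the disjoint union of two nonempty open sets each of which is "clopen in the compactification minus the point". More cleanly: a metrizable space $Y$ has $Y\cup\{\infty\}$ connected (with $\infty$ the point at infinity) exactly when $Y$ cannot be partitioned into two nonempty closed-in-$Y$ sets one of which is bounded (equivalently, $\infty$ is a non-cut point of the compactification, and every quasi-component of $Y$ is unbounded). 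So the real content is: every nonempty relatively clopen subset of $\dot E_{\imag}(f)$ is unbounded in $\mathbb C$.

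First I would transport the problem through the homeomorphisms established above. By Corollary \ref{t6}, $\dot E_{\imag}(f)\simeq G^\psi_\infty\subset[0,\infty)\times X$, and under $H$ the point $\infty$ of $\mathbb C\cup\{\infty\}$ corresponds to the point(s) at infinity of $J(\mathcal F)$; since $H$ extends to the one-point compactifications, it suffices to show $G^\psi_\infty\cup\{\infty\}$ is connected, i.e. that $G^\psi_\infty$ has no bounded nonempty relatively clopen subset. Here boundedness is controlled by the $T$-coordinate: a subset $S\subseteq G^\psi_\infty$ is "bounded" (has compact closure avoiding $\infty$) iff $\{t_{\uline s}:\langle t_{\uline s},\uline s\rangle\in S\}$ is bounded and $\{\uline s\}$ is contained in a compact subset of $X$. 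I would then argue that from any point $\langle t_{\uline s},\uline s\rangle\in G^\psi_\infty$ one can move continuously within $\overline{G^\psi_\infty}$ toward $\infty$: the segment $\{t\}\times\{\uline s\}$ for $t\in[t_{\uline s},\infty]$ lies in $\overline{G^\psi_\infty}$ by Claim \ref{t9} (the density of $G^{\psi\restriction X_\alpha}_\infty$ in $L^{\psi\restriction X'_\alpha}_\infty$, applied with $\alpha=\varnothing$), and its far end limits to $\infty$. This shows $\overline{G^\psi_\infty}\cup\{\infty\}$ — the closure taken in the compactification — is connected, since it is the union of these "vertical rays to infinity," all sharing the point $\infty$.

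The genuine difficulty is upgrading connectedness of the closure $\overline{G^\psi_\infty}\cup\{\infty\}$ to connectedness of $G^\psi_\infty\cup\{\infty\}$ itself, because $G^\psi_\infty$ is one-dimensional and totally disconnected-like (it is $\mathfrak E$!), so the vertical rays are \emph{not} contained in $G^\psi_\infty$ — only their basepoints are. I would handle this the way Mayer and Kawamura--Oversteegen--Tymchatyn do: show that the "closure" operation does not create new quasi-components, i.e. prove directly that a nonempty set $U$ that is clopen in $G^\psi_\infty$ must be unbounded. Suppose not; then $\overline U$ (closure in $[0,\infty]\times X$, adding $\infty$ if needed) is a compact set disjoint from $\infty$, and $V:=\overline{G^\psi_\infty\setminus U}$ meets $\overline U$ only in the "limit points at finite height that were removed." The key point is that at every point $\langle t_{\uline s},\uline s\rangle$ of $U$, the entire vertical segment above it lies in $\overline{G^\psi_\infty}$ but $U$ is bounded, so there is a height $t^*$ above which the segment lies in $\overline{G^\psi_\infty\setminus U}$; choosing $\langle t_{\uline s},\uline s\rangle$ with $t_{\uline s}$ near the supremum of heights in $U$ and using lower semi-continuity of $\psi$ together with Claims \ref{t8}--\ref{t10} (density of the finite part, and the crucial fact that one can perturb $\uline s$ to decrease/fix $t_{\uline s}$ while leaving $X_\alpha$, via \cite[Observation 3.7]{rem}), I would produce points of $G^\psi_\infty\setminus U$ converging to $\langle t_{\uline s},\uline s\rangle\in U$, contradicting that $U$ is open. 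This "push down from a vertical ray" argument is the heart of the matter; everything else is bookkeeping through $H$ and the compactification. Finally, I would remark that the identical argument, restricting $\psi$ to $\{\uline s:s_n\to+\infty\}$ or $\{\uline s:s_n\to-\infty\}$, shows $\dot E^+_{\imag}(f)\cup\{\infty\}$ and $\dot E^-_{\imag}(f)\cup\{\infty\}$ are connected as well.
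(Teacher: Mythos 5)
Your reduction to the symbolic model and your criterion (connectedness of $G^\psi_\infty\cup\{\infty\}$ is equivalent to every nonempty relatively clopen subset of $G^\psi_\infty$ accumulating at $\infty$) are both correct, and the observation that $L^{\psi\restriction X'}_\infty\cup\{\infty\}$ is connected because it is a union of vertical rays through $\infty$ is fine. But the step you yourself flag as ``the heart of the matter'' is missing, not merely compressed, and the specific mechanism you describe does not close. Suppose $U$ is a nonempty bounded clopen subset of $G^\psi_\infty$ and $p=\langle t_{\uline{s}},\uline{s}\rangle\in U$ has $t_{\uline{s}}$ close to $M:=\sup\{t_{\uline{s}'}:\langle t_{\uline{s}'},\uline{s}'\rangle\in U\}$. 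The points of $G^\psi_\infty\setminus U$ that density of the graph in the vertical segment over $\uline{s}$ provides converge to a point $\langle t^*,\uline{s}\rangle$ with $t^*>M$ --- they do not converge to $p$, so they yield no contradiction with openness of $U$ at $p$. Conversely, any sequence manufactured via the perturbations of Claims \ref{t8} and \ref{t10} does converge to $p$, but precisely because $U$ is open such a sequence eventually lies in $U$, so it cannot witness $p\in\overline{G^\psi_\infty\setminus U}$ either. No contradiction follows from ``pushing down'' alone. The genuine argument (Mayer's, and the one in \cite[Theorem 3.4]{rem}) is a boundary-bumping argument carried out in the brush $L^{\psi\restriction X'}_\infty\cup\{\infty\}$: an open set separating $\overline{U}$ from $\infty$ must cut every hair over the projection of $U$ at non-endpoints, and density of the endpoint graph in $L$ then produces points of $G^\psi_\infty\setminus U$ in $\overline{U}$, contradicting that $U$ is relatively closed. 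That argument has real content and must be written out or cited; naming Mayer and Kawamura--Oversteegen--Tymchatyn is the right instinct, but what you sketch in its place is not their argument.

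The paper takes a different and shorter route that avoids redoing the explosion-point argument: it covers $\dot E_{\imag}(f)\cup\{\infty\}$ by the sets $H\big(G^{\psi\restriction A_{\uline{s}^0}}_\infty\big)\cup\{\infty\}$ for $\uline{s}^0\in X'$, each of which is connected by the proof of \cite[Theorem 3.4]{rem} combined with the density computation $\overline{G^{\psi\restriction A_{\uline{s}^0}}_\infty}=L^{\psi\restriction A'_{\uline{s}^0}}_\infty$ from Claim \ref{t9}; these connected sets all contain $\infty$ and their union is everything. If you insist on a direct argument on all of $G^\psi_\infty$ (which your use of Claim \ref{t9} with $\alpha=\varnothing$ would in principle support), you must supply the full explosion-point argument yourself; otherwise the efficient path is the paper's covering argument.
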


\begin{proof}For every $\uline s^0\in X'$,$$H\big(L^{\psi\restriction A'_{{\uline{s}}^0}}_\infty\big)\cup \{\infty\}$$ is a Lelek fan (a smooth fan with a dense set of endpoints) by  \cite[Theorem 3.6]{rem}. The endpoint set of any Lelek fan becomes connected when the ramification point is added to it  \cite[Section 2]{rem}. Thus $$H\big(G^{\psi\restriction A_{{\uline{s}}^0}}_\infty\big)\cup \{\infty\}$$ is a connected subset of $ \ddot E(f)\cup \{\infty\}$ for every $\uline s^0\in X'$. Every point of $\ddot E(f)$ is contained in a set of that form. Therefore $ \ddot E(f)\cup \{\infty\}$  can be written as a union of connected sets each containing the point $\infty$.\end{proof}

This makes $\ddot E(f)$ a particularly nice embedding of $\mathfrak E$. 
For complete Erd\H{o}s space $\mathfrak E_{\mathrm{c}}$ this type of embedding was discovered  by Kawamura, Oversteegen, and Tymchatyn when they proved $E(f)\simeq \mathfrak E_{\mathrm{c}}$ \cite{31}. Mayer \cite{may} had already established the connectedness of $E(f)\cup \{\infty\}$.

\end{document}